\providecommand{\U}[1]{\protect\rule{.1in}{.1in}}
\newtheorem{theorem}{Theorem}[section]
\newtheorem{condition}{Assumption}[section]
\newtheorem{corollary}{Corollary}[section]
\newtheorem{proposition}{Proposition}[section]
\newtheorem{remark}{Remark}[section]
\newenvironment{proof}[1][Proof]{\noindent\textbf{#1.} }{\ \rule{0.5em}{0.5em}}
\numberwithin{equation}{section}
\def\fr{\frac} \def\ol{\overline} \def\sn{spectrally negative }
\def\la{\label} \def\dd{drawdown } \def\Eq{\Leftrightarrow} 
\newcommand{\ninseps}[3]{
\begin{figure}[h]
\begin{center}
 \scalebox{#3}{\includegraphics{#1}}
\end{center}

\vspace{-0.2cm}
\caption{\hspace{0.25cm}#2\label{f:#1}}
\end{figure}
}   
 \def\be{\begin{equation}} \def\ee{\end{equation}}
\def\dd{drawdown } \def\sn{spectrally negative }
    \def\bc{\begin{cases}
  }     
\def\ec{\end{cases}}
\long\def\symbolfootnote[#1]#2{
\begingroup
\def\thefootnote{\fnsymbol{footnote}}\footnote[#1]{#2}
\endgroup}  \def\Eq{\Leftrightarrow}
\newtheorem{Thm}{Theorem}
\newtheorem{Lem}{Lemma}
\newtheorem{Rem}[Lem]{Remark}
\newtheorem{Exe}[Lem]{Exercice}
\def\beXe{\begin{Exe}} \def\eeXe{\end{Exe}}
\def\eeD{\end{Def}} \def\beD{\begin{Def}}
\def\beXa{\begin{Exa}} \def\eeXa{\end{Exa}}
\def\beR{\begin{Rem}} \def\eeR{\end{Rem}}
\def\beL{\begin{Lem}} \def\eeL{\end{Lem}}
\newtheorem{Pro}[Lem]{Proposition}
\def\beP{\begin{Pro}} \def\eeP{\end{Pro}}
\def\beC{\begin{Cor}} 
\def\beT{\begin{Thm}}
  \def\eeT{\end{Thm}}
\begin{document}

\title{General drawdown of general tax model in a time-homogeneous Markov framework}
\author{Florin Avram\thanks{Laboratoire de Math\'ematiques Appliqu\'ees, Universit\'e de Pau, France}
	\and Bin Li\thanks{Department of Statistics and Actuarial Science, University of
		Waterloo, Waterloo, ON, N2L 3G1, Canada}
	\and Shu Li\thanks{Department of Statistical and Actuarial Sciences, Western University, London, ON, N6A 5B7, Canada}}
\maketitle

\begin{abstract}
Drawdown/regret times feature prominently in optimal stopping problems, in  statistics (CUSUM procedure) and in mathematical finance (Russian options).  Recently it was discovered that a first passage theory with  general drawdown times, which generalize classic ruin times, may be explicitly developed for spectrally negative L\'evy processes -- see Avram, Vu,  Zhou(2017), Li, Vu,  Zhou(2017). In this paper, we further examine  general drawdown related quantities for taxed time-homogeneous Markov processes,  using the pathwise connection between general drawdown and tax process.

\end{abstract}

\section{Introduction}

Our paper is part of a larger program to improve the control of a reserves/risk
 process $X$. The rough idea is that when below low levels $a$, the reserves should be replenished at some cost, and when above high levels $b$, the reserves should be invested to yield dividends -- see for example \cite{AA10}. The low levels first considered historically have been those of $X$, but  one may equally consider low levels of the drawdown/regret/process reflected at the maximum,
defined by
\[
  D_t=\overline { X}_t-X_t , \quad \overline X_t:= \sup_{0 \leq s\leq t} X_s ,
\]
which turn out to be of interest in several problems in statistics, mathematical finance
and risk theory \cite{P54,T75,L77,SS93,AKP04,C14,MP12,LLL15,LLZ17b,AVZ17,LVZ17,ASVA18}. The book\cite{Z18} summarizes most of the recent developments on drawdown.

Assume from now on that our underlying  process $X$ is time-homogeneous and Markovian.
 The first passage times of $X$ across
a level $x\in%
\mathbb{R}
$ are denoted by
\[
\tau_{x}^{+}=\inf\left\{  t\geq0:X_{t}>x\right\}  \text{ and }\tau_{x}%
^{-}=\inf\left\{  t\geq0:X_{t}<x\right\}  .
\]

For simplicity, we
assume $X$ is {upward skip-free}. Moreover, we assume $X$ is regular in the sense that $\mathbb{P}_{y}(\tau
_{x}^{+(-)}<\infty)>0$, for all $x,y\in\mathbb{R}$.

Instrumental in achieving the control of one dimensional risk processes   are the distributions  of the two-sided smooth and non-smooth first passage times  from a bounded  interval $[u,v]$. For upward skip-free processes, it turns out easier to study  the corresponding  Laplace transforms:
\begin{align}
B^{(q)}(x;u,v) &  :=\mathbb{E}_{x}\left[  e^{-q\tau_{v}^{+}}1_{\left\{
\tau_{v}^{+}<\tau_{u}^{-}\right\}  }\right]  ,\la{B}\\
C^{(q,s)}(x;u,v) &  :=\mathbb{E}_{x}\left[  e^{-q\tau_{u}^{-}-s(u-X_{\tau
_{u}^{-}})}1_{\left\{  \tau_{u}^{-}<\tau_{v}^{+}\right\}  }\right]  ,\la{C}
\end{align}
where   $q,s\geq0$, and $u\leq x\leq v$.
Indeed, for L\'evy processes for example it holds that:
\[ B^{(q)}(x;u,v)    =\fr{W_q(x-u)}{W_q(v-u)},\]
where $W_q(x)$ is called the scale function \cite{S76, B98,K14}, and  for some non-homogeneous \sn Markov processes \cite{CPRY17} a similar formula  holds
\[ B^{(q)}(x;u,v)    =\fr{W_q(x;u)}{W_q(v;u)} ,\]
where now the newly defined scale function naturally depends on the two variables $x,u$.

Several control problems for $(X,D)$ are known to reduce  to the  study of the process $\ol X_t$ with all its negative excursions excised, which turns out to
be a deterministic process, killed at a random time \cite{ABBR09,AACI14}--see Figure \ref{f:samplepath3} below. This supports
the parallel fundamental idea  of \cite{LLZ17b} to base the study of $(X,D)$  on the existence
of two differential parameters.
\begin{condition}
\label{asp}For all $q,s\geq0$ {and $u\leq x$ fixed,  assume that  $B^{(q)}(x;u,v)$ and
$C^{(q,s)}(x;u,v)$ are differentiable in $v$ at  $v=x$} 
and denote
\[
\left.  \frac{\partial B^{(q)}(x;u,v)}{\partial v}\right\vert _{v=x}%
=-b_{u}^{(q)}(x)\text{ and }\left.  \frac{\partial C^{(q,s)}(x;u,v)}%
{\partial v}\right\vert _{v=x}=c_{u}^{(q,s)}(x).
\]

\end{condition}

A necessary condition for Assumption \ref{asp} to hold is that
\[
\tau_x^+=0 \mbox{ and } X_{\tau_x^+}=x, \mathbb{P}_{x}
-a.s. \mbox{ for all }x\in \mathbb{R}.
\]

To understand the joint dynamics of two dimensional process $t\mapsto (X_t,D_t)$, it is useful
to look at Figure \ref{f:samplepath3}, reproduced from \cite{ASVA18}, which depicts a sample path of $(X,D)$, where $X$ is chosen to be the
standard Brownian motion and the exit region is $R=[-6,7]\times[0,10]$.
\ninseps{samplepath3}{A sample path of $(X,D)$ (sampled at time step $\Delta t = 0.1$) when $X$ is a standard Brownian motion with $X_0 =a+d=4$, and the region $R$ with $d=10$, $a=-6$ and $b=7$; the dark shaded region shows the possible points of exit of $(X,D)$ from $R=[-6,7]\times[0,10]$}{0.45}
As is clear from the figure and
from its definition, the process $(X,D)$ has very particular
dynamics on $R$: away from the boundary
$\partial_1 := \{x \in {\mathbb R} \times {\mathbb R}_+: x_2 = 0 \}$
it oscillates on the line segment $L_{\ol{X}_t}$ where, for $c \in {\mathbb R}$,
$L_c := \{x \in {\mathbb R} \times {\mathbb R}_+: x_1 + x_2 = c \}$. 
{These oblique lines
represent each a negative excursion}.
{On $\partial_1$, we observe the
evolution of the process $\ol X_t$ with all its negative excursions excised};
 as $\ol {X}_t$ increases, the line
segment $L_{\ol{X}_t}$ on which $(X,D)$ oscillates 
{during a negative excursion} advances continuously to the right.

To fully specify the process $\ol X_t$ with  its negative excursions excised, we must give a rule for killing a negative excursion; two classic choices  are $X_t <a$ (ruin stopping) and $D_t >d$ (\dd stopping), which are the left and upper boundaries in Figure \ref{f:samplepath3}, respectively. A linear combination of these, translating into an oblique upper boundary, has been studied in \cite{AVZ17}.

In our paper we consider  more general upper boundaries,  which
include the previous works as particular cases. Following \cite{LVZ17}, we consider stopping times
\[
\tau_{f}=\inf\left\{  t\geq0:X_{t}<f(\overline X_{t})\right\}  =\inf\{t\geq0:Y_{t}>0\} ,
\]
where $$Y_{t}=f(\overline X_{t})-X_{t}=D_t- \ol f(\ol X_t), \; {t\geq0}$$ will be called
a general drawdown
process. Here  $\ol f(m):=m-f(m)$, and $f$ must be 
{nondecreasing} such that
\[
f(x)<x \Eq \ol f(x) >0,\quad x\in%
\mathbb{R}
\text{.}%
\]
Note that we have $Y_{0}=f(X_{0})-X_{0}<0$.

General \dd times include many important particular subcases which have been extensively studied in the literature:
\begin{enumerate}

\item If $f(x)=0$, $\tau_f=\tau_{0,0}$ is the ruin time.

\item If $f(x)=x-d$, $\tau_f=\tau_{1,d}$ is the classic drawdown time.

\item If $f(x)=\xi x, \xi <1$, when $\tau_f=\tau_{\xi,0}$ is the proportional drawdown time.
\item  If \begin{equation} f(x)=\xi x - d \Eq \bar f(m)=m-f(m)=(1-\xi) m + d \label{xi}, \; \xi\in {(-\infty, 1]}, d \geq 0, (1-\xi)>0, \end{equation}
the   corresponding drawdown time is
\begin{equation}
 \tau_f=\tau_{\xi,d}=\inf \left\{t\geq 0: X_t \leq
 \xi\overline {X}_t-d\right\} =\inf \left\{t\geq 0: \overline {X}_t -X_t >
 (1-\xi)\overline {X}_t+d\right\}.\label{ddl}
 \end{equation}

This is called the affine drawdown studied in \cite{AVZ17}. It turns out that this extension complicates only slightly the classic drawdown results, while allowing  treating simultaneously times cases 2 and 3.

\item Nonlinear drawdown times emerged in \cite{L77} and  were used by Az\'ema and Yor \cite{AY79} to  provide a solution of the Skorokhod problem of stopping a {Brownian motion} to obtain a  given desired centered marginal measure.

\end{enumerate}

{\bf Contents}. Below, we extend first the  general \dd results  of \cite{LVZ17} from \sn
 L\'evy processes to \sn time-homogeneous Markov processes -- see Section \ref{s:dd}. Then, in Section \ref{s:tax} we allow also for the possibility
 of general taxation. The method of proof involves a nontrivial use of the ``differential exit problems" of \cite{LLZ17b}.
The results in Section \ref{s:dd} are applied  in the three particular cases in which the ``differential exit parameters" of \cite{LLZ17b} are analytically computable: \sn
 L\'evy processes and diffusions. A third example, which is illustrated in \cite{LLZ17b}, is of Ornstein-Uhlenbeck-type processes with exponential jumps.

\section{Main results of general drawdown in the time-homogeneous Markov process}\label{s:dd}

The following pathwise inequalities are central to the construction of tight
bounds for the joint law of the triplet $(\tau_{f},\overline X_{\tau_{f}},Y_{\tau_{f}})$.

\begin{proposition}
\label{prop path}For $q,s\geq0$, $x\in\mathbb{R}$ and $\varepsilon>0$, we have
$\mathbb{P}_{x}$-a.s.
\begin{equation}
1_{\{\tau_{x+\varepsilon}^{+}<\tau_{f(x+\varepsilon)}^{-}\}}\leq1_{\{\tau_{x+\varepsilon}^{+}<\tau_{f}\}}\leq1_{\{\tau_{x+\varepsilon}^{+}<\tau_{f(x)}^{-}\}}, \label{eq.up}%
\end{equation}
and
\begin{align}
e^{-q\tau_{f}-sY_{\tau_{f}}}1_{\left\{ \tau_{f}<\tau_{x+\varepsilon}^{+}\right\}  }  &  \geq e^{-q\tau_{f(x)}^{-}-s(f(x+\varepsilon)-X_{\tau_{f(x)}^{-}})}1_{\{\tau_{f(x)}^{-}<\tau_{x+\varepsilon
}^{+}\}},\label{eq.down1}\\
e^{-q\tau_{f}-sY_{\tau_{f}}}1_{\left\{\tau_{f}<\tau_{x+\varepsilon}^{+}\right\}  }  &  \leq e^{-q\tau_{f(x+\varepsilon)}^{-}-s(f(x)-X_{\tau_{f(x+\varepsilon)}^{-}})}1_{\{\tau_{f(x+\varepsilon)}^{-}<\tau_{x+\varepsilon}^{+}\}}. \label{eq.down2}%
\end{align}

\end{proposition}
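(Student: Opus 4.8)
The key object is the general drawdown time $\tau_f=\inf\{t\ge0:X_t<f(\overline X_t)\}$, and the proposition compares it against ordinary one-sided passage times $\tau_{f(x)}^-$ and $\tau_{f(x+\varepsilon)}^-$ for the two fixed thresholds straddling the maximum at the moment it crosses level $x$. The plan is to exploit the monotonicity of $f$ together with the fact that, until $\overline X$ exceeds $x+\varepsilon$, the running maximum $\overline X_t$ is squeezed between $x$ and $x+\varepsilon$ on the relevant event, so that the moving barrier $f(\overline X_t)$ is trapped between the two constant barriers $f(x)$ and $f(x+\varepsilon)$.

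First I would prove the indicator chain \eqref{eq.up}. On the event $\{\tau_{x+\varepsilon}^+<\tau_f\}$, the process reaches $x+\varepsilon$ before $\tau_f$; for all $t\le\tau_{x+\varepsilon}^+$ we have $\overline X_t\le x+\varepsilon$, hence by monotonicity $f(\overline X_t)\le f(x+\varepsilon)$, so the drawdown barrier never drops below the constant level $f(x+\varepsilon)$ before $\tau_{x+\varepsilon}^+$; this forces $\tau_{f(x+\varepsilon)}^-\ge\tau_f>\tau_{x+\varepsilon}^+$ and gives the right inequality. For the left inequality, note that before $\tau_{x+\varepsilon}^+$ the barrier satisfies $f(\overline X_t)\ge f(x)$ only once $\overline X_t\ge x$; here I would use upward skip-freeness, which guarantees $X$ passes continuously through $x$ so that $\overline X_t\ge x$ holds on a suitable initial stretch, whence crossing $f(x+\varepsilon)$ from above before hitting $x+\varepsilon$ already triggers $\tau_f$, yielding $\{\tau_{x+\varepsilon}^+<\tau_{f(x+\varepsilon)}^-\}\subseteq\{\tau_{x+\varepsilon}^+<\tau_f\}$. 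Skip-freeness is what lets me pin $X_{\tau_{x+\varepsilon}^+}=x+\varepsilon$ and control $\overline X$ on the overshoot-free passage, and this is the step I expect to require the most care.

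Next I would handle the exponential inequalities \eqref{eq.down1}--\eqref{eq.down2}, which refine the downward comparison by also tracking the discount $q$ and the penalty $s$ on the value $Y_{\tau_f}$. On the event $\{\tau_f<\tau_{x+\varepsilon}^+\}$ the maximum has not yet reached $x+\varepsilon$, so $\overline X_{\tau_f}\in[x,x+\varepsilon]$ and correspondingly $f(\overline X_{\tau_f})\in[f(x),f(x+\varepsilon)]$; I would translate this barrier sandwich into a sandwich of stopping times, $\tau_{f(x)}^-\le\tau_f\le\tau_{f(x+\varepsilon)}^-$ on the relevant overlap of events. The discount factor $e^{-qt}$ is decreasing in $t$, so ordering the times immediately orders the $e^{-q\tau}$ factors in the two required directions. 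For the penalty term I would compare $Y_{\tau_f}=f(\overline X_{\tau_f})-X_{\tau_f}$ with $f(x\pm\varepsilon)-X_{\tau_{f(x\pm\varepsilon)}^-}$: since the overshoot $-(\text{barrier}-X_{\tau^-})$ at the crossing and the barrier values are again monotone in the threshold, the two inequalities fall out once the time ordering is established.

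The main obstacle throughout is the passage from the moving barrier $f(\overline X_t)$ to the two constant barriers, since $\overline X_t$ is itself path-dependent and only constrained to lie in $[x,x+\varepsilon]$ on the restricted events; the argument hinges on coupling the three stopping times on a common path and invoking upward skip-freeness to rule out overshoots of the running maximum past $x+\varepsilon$ and to guarantee $X$ actually attains each intermediate level. Once the pathwise barrier sandwich $f(x)\le f(\overline X_t)\le f(x+\varepsilon)$ is secured on each event, all four assertions reduce to elementary monotonicity of indicators, of $t\mapsto e^{-qt}$, and of the overshoot in the threshold, and I would assemble them case by case.
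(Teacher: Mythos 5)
Your plan follows the paper's idea (trap the moving barrier $f(\overline X_t)$ between the constant barriers $f(x)$ and $f(x+\varepsilon)$ and convert that into an ordering of stopping times), but you have inverted the resulting order, and the inversion is fatal rather than cosmetic. Since $X_0=x$, we have $\overline X_t\ge x$ for all $t$, so $f(\overline X_t)\ge f(x)$: going below the \emph{low} constant barrier forces having already gone below the moving one, hence $\tau_f\le\tau_{f(x)}^-$ a.s. Symmetrically, for $t<\tau_{x+\varepsilon}^+$ we have $\overline X_t\le x+\varepsilon$, so $f(\overline X_t)\le f(x+\varepsilon)$: a drawdown occurring before $\tau_{x+\varepsilon}^+$ forces having already gone below the \emph{high} constant barrier, hence $\tau_{f(x+\varepsilon)}^-\le\tau_f$ on $\{\tau_f<\tau_{x+\varepsilon}^+\}$. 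The correct sandwich is therefore $\tau_{f(x+\varepsilon)}^-\le\tau_f\le\tau_{f(x)}^-$, exactly the opposite of your claim $\tau_{f(x)}^-\le\tau_f\le\tau_{f(x+\varepsilon)}^-$. The direction matters: \eqref{eq.down1} needs $e^{-q\tau_f}\ge e^{-q\tau_{f(x)}^-}$, i.e.\ $\tau_f\le\tau_{f(x)}^-$; with your ordering the discount factors compare the wrong way and you would ``prove'' the reverse of \eqref{eq.down1} and \eqref{eq.down2}. The same inversion corrupts your argument for \eqref{eq.up}: for the right inequality you conclude $\{\tau_{x+\varepsilon}^+<\tau_f\}\subseteq\{\tau_{x+\varepsilon}^+<\tau_{f(x+\varepsilon)}^-\}$, which is false in general (before the maximum reaches $x+\varepsilon$, the path can dip below $f(x+\varepsilon)$ while staying above the current, lower, barrier $f(\overline X_t)$), and which in any case says nothing about $\tau_{f(x)}^-$, the time that actually appears in the right inequality; likewise your justification of the left inequality (``crossing $f(x+\varepsilon)$ already triggers $\tau_f$'') asserts $\tau_f\le\tau_{f(x+\varepsilon)}^-$ before $\tau_{x+\varepsilon}^+$, again the reverse of the true implication.

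Two further points. First, even with the sandwich fixed, \eqref{eq.down1}--\eqref{eq.down2} do not simply ``fall out'': the penalty terms require the pathwise comparisons $X_{\tau_f}\ge X_{\tau_{f(x)}^-}$ (by cases: if $\tau_f<\tau_{f(x)}^-$ then $X_{\tau_f}\ge f(x)>X_{\tau_{f(x)}^-}$; otherwise the two times and positions coincide), which gives $Y_{\tau_f}\le f(x+\varepsilon)-X_{\tau_{f(x)}^-}$, and $f(x)-X_{\tau_{f(x+\varepsilon)}^-}\le Y_{\tau_{f(x+\varepsilon)}^-}\le Y_{\tau_f}$ (the last step by the sign argument $Y_{\tau_{f(x+\varepsilon)}^-}\le 0\le Y_{\tau_f}$ when $\tau_{f(x+\varepsilon)}^-<\tau_f$, and equality when the times coincide); one also needs the event inclusions $\{\tau_{f(x)}^-<\tau_{x+\varepsilon}^+\}\subseteq\{\tau_f<\tau_{x+\varepsilon}^+\}$ and $\{\tau_f<\tau_{x+\varepsilon}^+\}\subseteq\{\tau_{f(x+\varepsilon)}^-<\tau_{x+\varepsilon}^+\}$ so that the indicators compare in the right direction. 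Second, the weight you place on upward skip-freeness is a red herring for this proposition: $\overline X_t\ge x$ holds simply because $X_0=x$, and $\overline X_t\le x+\varepsilon$ for $t<\tau_{x+\varepsilon}^+$ holds by the definition of $\tau_{x+\varepsilon}^+$; the pathwise argument needs nothing more, and skip-freeness only becomes essential later, in the strong Markov iteration of Theorem \ref{thm markov}.
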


\begin{proof}
By analyzing the sample paths of $X$, it is easy to see that $\tau_{f}\leq
\tau_{f(x)}^{-}\quad\mathbb{P}_{x}$-a.s. Thus, $\mathbb{P}_{x}$-a.s. we have
\begin{equation}
(\tau_{x+\varepsilon}^{+}<\tau_{f})=(\tau_{x+\varepsilon}^{+}<\tau_{f}\leq
\tau_{f(x)}^{-})\subset(\tau_{x+\varepsilon}^{+}<\tau_{f(x)}^{-})\nonumber
\end{equation}
and%
\begin{equation}
(\tau_{x+\varepsilon}^{+}<\tau_{f(x+\varepsilon)}^{-})=(\tau_{x+\varepsilon}^{+}<\tau_{f(x+\varepsilon
)}^{-},\tau_{x+\varepsilon}^{+}<\tau_{f})\subset( \tau_{x+\varepsilon}^{+}<\tau_{f}),\nonumber
\end{equation}
which immediately implies (\ref{eq.up}).

On the other hand, by using the same argument, we have, $\mathbb{P}_{x}%
$-a.s.,
\begin{equation}
(\tau_{f(x)}^{-}<\tau_{x+\varepsilon}^{+})=(\tau_{f}\leq \tau_{f(x)}^{-}<\tau_{x+\varepsilon}^{+})\subset(\tau_{f}<\tau_{x+\varepsilon}^{+}), \label{w1}%
\end{equation}
and%
\begin{equation}
(\tau_{f}<\tau_{x+\varepsilon}^{+})=(\tau_{f(x+\varepsilon)}^{-}\leq\tau_{f}<\tau_{x+\varepsilon}^{+})\subset
(\tau_{f(x+\varepsilon)}^{-}<\tau_{x+\varepsilon
}^{+}). \label{w2}%
\end{equation}
For any path $\omega\in(\tau_{f(x)}^{-}<\tau_{x+\varepsilon}%
^{+})$, we know from (\ref{w1}) that $\omega\in(\tau
_{f}\leq \tau_{f(x)}^{-}<\tau_{x+\varepsilon}^{+})$. This implies $\overline X_{\tau_{f}%
}(\omega)\leq x+\varepsilon$ and $X_{\tau_{f}}(\omega)\geq X_{\tau_{f(x)}^{-}%
}(\omega)$, which further entails that $Y_{\tau_{f}}(\omega)=f(\overline X_{\tau_{f}%
}(\omega))-X_{\tau_{f}}(\omega)\leq f(x+\varepsilon)-X_{\tau_{f(x)}^{-}}(\omega
)$. Therefore, by the above analysis and (\ref{w1}),$\quad\mathbb{P}_{x}%
$-a.s.,
\[
e^{-q\tau_{f(x)}^{-}-s(f(x+\varepsilon)-X_{\tau_{f(x)}^{-}})}1_{\left\{
\tau_{f(x)}^{-}<\tau_{x+\varepsilon}^{+}\right\}  }\leq
e^{-q\tau_{f}-sY_{\tau_{f}}}1_{\left\{ \tau_{f}%
<\tau_{x+\varepsilon}^{+}\right\}  }%
\]
which naturally leads to (\ref{eq.down1}).

Similarly, for any sample path $\omega\in(\tau_{f}%
<\tau_{x+\varepsilon}^{+})$, we know from (\ref{w2}) that $\omega\in(\tau_{f(x+\varepsilon)}^{-}\leq\tau_{f}<\tau_{x+\varepsilon}^{+})$,
which implies that $f(x)-X_{\tau_{f(x+\varepsilon)}^{-}}(\omega)\leq
Y_{\tau_{f(x+\varepsilon)}^{-}}(\omega)\leq Y_{\tau_{f}}(\omega)$. Here the last
inequality is because $Y_{\tau_{f(x+\varepsilon)}^{-}}(\omega)\leq0\leq
Y_{\tau_{f}}(\omega)$ if $\tau_{f(x+\varepsilon)}^{-}<\tau_{f}$, and
$Y_{\tau_{f(x+\varepsilon)}^{-}}(\omega)=Y_{\tau_{f}}(\omega)$ if
$\tau_{f(x+\varepsilon)}^{-}=\tau_{f}$. Therefore, we obtain,$\quad\mathbb{P}%
_{x}$-a.s.,
\[
e^{-q\tau_{f}-sY_{\tau_{f}}}1_{\left\{\tau_{f}%
<\tau_{x+\varepsilon}^{+}\right\}  }\leq e^{-q\tau_{f(x+\varepsilon)}^{-}%
-s(f(x)-X_{\tau_{f(x+\varepsilon)}^{-}})}1_{\{\tau_{f(x+\varepsilon)}^{-}<\tau_{x+\varepsilon}^{+}\}},
\]
which proves (\ref{eq.down2}).\bigskip
\end{proof}

By Proposition \ref{prop path}, we easily obtain the following useful estimates.

\begin{corollary}
\label{cor bd}For $q,s\geq0$, $x\in%
\mathbb{R}
$ and $\varepsilon>0$,%
\[
B^{(q)}(x;f(x+\varepsilon),x+\varepsilon)\leq\mathbb{E}_{x}\left[  e^{-q\tau_{x+\varepsilon}^{+}}1_{\{\tau
_{x+\varepsilon}^{+}<\tau_{f}\}}\right]  \leq B^{(q)}(x;f(x),x+\varepsilon),
\]
and%
\begin{align*}
\mathbb{E}_{x}\left[  e^{-q\tau_{f}-sY_{\tau_{f}}}1_{\left\{  \tau
_{f}<\tau_{x+\varepsilon}^{+}\right\}  }\right]    & \leq
e^{s(f(x+\varepsilon)-f(x))}C^{(q,s)}%
(x;f(x+\varepsilon),x+\varepsilon)\\
\mathbb{E}_{x}\left[  e^{-q\tau_{f}-sY_{\tau_{f}}}1_{\left\{  \tau_{f}<\tau_{x+\varepsilon}^{+}\right\}  }\right]    & \geq
e^{-s(f(x+\varepsilon)-f(x))}C^{(q,s)}(x;f(x),x+\varepsilon)
\end{align*}

\end{corollary}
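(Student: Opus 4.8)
The plan is to obtain all four estimates by taking $\mathbb{P}_{x}$-expectations of the pathwise bounds of Proposition \ref{prop path} and matching the resulting expectations against the definitions (\ref{B}) and (\ref{C}) of $B^{(q)}$ and $C^{(q,s)}$. Since each pathwise relation is an inequality between nonnegative random variables, monotonicity of the expectation preserves its direction, so the only genuine work is bookkeeping of the exponents.

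For the two bounds on $\mathbb{E}_{x}[e^{-q\tau_{x+\varepsilon}^{+}}1_{\{\tau_{x+\varepsilon}^{+}<\tau_{f}\}}]$, I would multiply the chain (\ref{eq.up}) through by the nonnegative factor $e^{-q\tau_{x+\varepsilon}^{+}}$ and apply $\mathbb{E}_{x}$. The leftmost term then reads $\mathbb{E}_{x}[e^{-q\tau_{x+\varepsilon}^{+}}1_{\{\tau_{x+\varepsilon}^{+}<\tau_{f(x+\varepsilon)}^{-}\}}]$, which is exactly $B^{(q)}(x;f(x+\varepsilon),x+\varepsilon)$ upon setting $u=f(x+\varepsilon)$, $v=x+\varepsilon$ in (\ref{B}); likewise the rightmost term equals $B^{(q)}(x;f(x),x+\varepsilon)$. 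This yields the first display of the corollary immediately, with no exponent adjustment required.

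For the two bounds on $\mathbb{E}_{x}[e^{-q\tau_{f}-sY_{\tau_{f}}}1_{\{\tau_{f}<\tau_{x+\varepsilon}^{+}\}}]$, I would take $\mathbb{E}_{x}$ of (\ref{eq.down1}) and (\ref{eq.down2}). Here the exponents on the right-hand sides involve $f(x+\varepsilon)-X_{\tau^{-}}$ or $f(x)-X_{\tau^{-}}$, whereas the definition (\ref{C}) of $C^{(q,s)}(x;u,v)$ uses the overshoot measured from the barrier $u$ itself, namely $u-X_{\tau_{u}^{-}}$. The key step is therefore to recast these into the canonical form by extracting the deterministic constant $f(x+\varepsilon)-f(x)$ from the exponent. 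Concretely, in (\ref{eq.down2}) one writes $f(x)-X_{\tau_{f(x+\varepsilon)}^{-}} = (f(x+\varepsilon)-X_{\tau_{f(x+\varepsilon)}^{-}}) - (f(x+\varepsilon)-f(x))$, so that $e^{-s(f(x)-X_{\tau_{f(x+\varepsilon)}^{-}})} = e^{s(f(x+\varepsilon)-f(x))}e^{-s(f(x+\varepsilon)-X_{\tau_{f(x+\varepsilon)}^{-}})}$; taking expectations and recognizing the surviving integral as $C^{(q,s)}(x;f(x+\varepsilon),x+\varepsilon)$ gives the claimed upper bound. Symmetrically, in (\ref{eq.down1}) the substitution $f(x+\varepsilon)-X_{\tau_{f(x)}^{-}} = (f(x)-X_{\tau_{f(x)}^{-}}) + (f(x+\varepsilon)-f(x))$ produces the factor $e^{-s(f(x+\varepsilon)-f(x))}$ together with the integral $C^{(q,s)}(x;f(x),x+\varepsilon)$, yielding the lower bound.

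The argument is essentially routine once Proposition \ref{prop path} is in hand, so I do not expect a substantive obstacle. The only point requiring care is the exponent bookkeeping for the $C^{(q,s)}$ bounds, namely matching the overshoot term to the convention in (\ref{C}) and correctly tracking the sign of the multiplicative constant $e^{\pm s(f(x+\varepsilon)-f(x))}$. Since $f$ is nondecreasing we have $f(x+\varepsilon)-f(x)\geq 0$, so the upper bound carries a factor $\geq 1$ and the lower bound a factor $\leq 1$, which is consistent with the directions of the two inequalities.
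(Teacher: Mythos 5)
Your proposal is correct and coincides with the paper's own (implicit) argument: the paper derives Corollary \ref{cor bd} exactly by taking $\mathbb{P}_x$-expectations of the pathwise inequalities in Proposition \ref{prop path} and matching the results against the definitions (\ref{B}) and (\ref{C}). Your exponent bookkeeping — splitting off the deterministic constant $e^{\pm s(f(x+\varepsilon)-f(x))}$ so the overshoot is measured from the correct barrier — is precisely the step the paper leaves to the reader, and you have carried it out with the right signs.
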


Next we present our main results of the general drawdown.

\begin{theorem}
\label{thm markov}Consider an upward skip-free time-homogeneous Markov process
$X$ such that Assumption \ref{asp} holds. For $q,s\geq0$ and $x<K\in
\mathbb{R}$, we have
\begin{align}
\mathbb{E}_{x}\left[  e^{-q\tau_{K}^{+}}1_{\{\tau_{K}^{+}<\tau_{f}%
\}}\right]    & =e^{-\int_{x}^{K}b_{f}^{(q)}(z)\mathrm{d}%
z}, \label{exitK}\\
\mathbb{E}_{x}\left[  e^{-q\tau_{f}-sY_{\tau_{f}}}1_{\{\overline
{X}_{\tau_{f}}\leq K\}}\right]    & =\int_{x}^{K}e^{-\int_{x}^{y}b_{f}^{(q)}(z)\mathrm{d}z}c_{f}^{(q,s)}(y)\mathrm{d}y. \label{ddfK}
\end{align}

\end{theorem}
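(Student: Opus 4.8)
The plan is to reduce both identities to first-order linear ODEs in the starting level $x$ (with the cap $K$ fixed), whose coefficients are precisely the differential parameters $b_f^{(q)}(x):=b_{f(x)}^{(q)}(x)$ and $c_f^{(q,s)}(x):=c_{f(x)}^{(q,s)}(x)$. Write
$$\Phi(x,K):=\mathbb{E}_x\big[e^{-q\tau_K^+}1_{\{\tau_K^+<\tau_f\}}\big],\qquad \Psi(x,K):=\mathbb{E}_x\big[e^{-q\tau_f-sY_{\tau_f}}1_{\{\overline X_{\tau_f}\le K\}}\big].$$
First I would fix a small $\varepsilon>0$ and condition on the first passage above $x+\varepsilon$. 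Because $X$ is upward skip-free, on $\{\tau_{x+\varepsilon}^+<\tau_f\}$ the process sits exactly at $x+\varepsilon$ at time $\tau_{x+\varepsilon}^+$ and its running maximum equals $x+\varepsilon$; the strong Markov property then makes the post-$\tau_{x+\varepsilon}^+$ path a fresh copy started at $x+\varepsilon$, governed by the same drawdown function $f$. Noting in addition that on $\{\tau_f<\tau_{x+\varepsilon}^+\}$ one automatically has $\overline X_{\tau_f}\le x+\varepsilon\le K$, this yields the renewal decompositions
\begin{align}
\Phi(x,K)&=\mathbb{E}_x\big[e^{-q\tau_{x+\varepsilon}^+}1_{\{\tau_{x+\varepsilon}^+<\tau_f\}}\big]\,\Phi(x+\varepsilon,K),\nonumber\\
\Psi(x,K)&=\mathbb{E}_x\big[e^{-q\tau_f-sY_{\tau_f}}1_{\{\tau_f<\tau_{x+\varepsilon}^+\}}\big]+\mathbb{E}_x\big[e^{-q\tau_{x+\varepsilon}^+}1_{\{\tau_{x+\varepsilon}^+<\tau_f\}}\big]\,\Psi(x+\varepsilon,K).\nonumber
\end{align}

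Next I would feed the two-sided estimates of Corollary \ref{cor bd} into these identities and expand using Assumption \ref{asp}. Since $B^{(q)}(x;u,x)=1$ and $C^{(q,s)}(x;u,x)=0$ (the latter because $\tau_x^+=0$), the first-order expansions in $v$ at $v=x$ read $B^{(q)}(x;u,x+\varepsilon)=1-b_u^{(q)}(x)\varepsilon+o(\varepsilon)$ and $C^{(q,s)}(x;u,x+\varepsilon)=c_u^{(q,s)}(x)\varepsilon+o(\varepsilon)$. Letting $u$ run between $f(x)$ and $f(x+\varepsilon)$, using continuity of $f$ to collapse the factors $e^{\pm s(f(x+\varepsilon)-f(x))}$ and continuity of $u\mapsto b_u^{(q)}(x),c_u^{(q,s)}(x)$ to squeeze the upper and lower bounds together, I would obtain
\begin{align}
\mathbb{E}_x\big[e^{-q\tau_{x+\varepsilon}^+}1_{\{\tau_{x+\varepsilon}^+<\tau_f\}}\big]&=1-b_f^{(q)}(x)\varepsilon+o(\varepsilon),\nonumber\\
\mathbb{E}_x\big[e^{-q\tau_f-sY_{\tau_f}}1_{\{\tau_f<\tau_{x+\varepsilon}^+\}}\big]&=c_f^{(q,s)}(x)\varepsilon+o(\varepsilon).\nonumber
\end{align}

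Substituting these expansions into the renewal decompositions, dividing by $\varepsilon$ and letting $\varepsilon\downarrow0$ gives the linear ODEs $\partial_x\Phi(x,K)=b_f^{(q)}(x)\Phi(x,K)$ and $\partial_x\Psi(x,K)=b_f^{(q)}(x)\Psi(x,K)-c_f^{(q,s)}(x)$. The boundary data sit on the diagonal: $\Phi(K,K)=1$ because $\tau_K^+=0$ while $\tau_f>0$ (as $Y_0=f(x)-x<0$), and $\Psi(K,K)=0$ because, starting at $K$, the property $\tau_K^+=0$ forces $\overline X_t>K$ for every $t>0$, hence $\overline X_{\tau_f}>K$ a.s.\ and $\{\overline X_{\tau_f}\le K\}$ is null. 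Solving the first ODE with $\Phi(K,K)=1$ gives $\Phi(x,K)=e^{-\int_x^K b_f^{(q)}(z)\,\mathrm{d}z}$, which is (\ref{exitK}); using this as integrating factor for the second ODE and integrating down from $K$ yields $\Psi(x,K)=\int_x^K e^{-\int_x^y b_f^{(q)}(z)\,\mathrm{d}z}c_f^{(q,s)}(y)\,\mathrm{d}y$, which is (\ref{ddfK}).

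The main obstacle is analytic rather than probabilistic: to promote the pointwise expansions to a bona fide ODE solution one must control the $o(\varepsilon)$ remainders uniformly in the base level, so that chaining the first decomposition over a partition $x=x_0<\cdots<x_n=K$ makes the product $\prod_{i=0}^{n-1}\mathbb{E}_{x_i}[e^{-q\tau_{x_{i+1}}^+}1_{\{\tau_{x_{i+1}}^+<\tau_f\}}]$ converge to the advertised multiplicative (product) integral. I would secure this by strengthening Assumption \ref{asp} to a locally uniform differentiability and requiring $b_f^{(q)}$ and $c_f^{(q,s)}$ to be continuous; this renders $\Phi(\cdot,K)$ and $\Psi(\cdot,K)$ continuously differentiable, after which the existence--uniqueness theorem for linear ODEs with continuous coefficients identifies them with the closed forms above. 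A secondary delicate point is the vanishing of the diagonal value $\Psi(K,K)$, which is exactly where the property $\tau_x^+=0$ (noted as necessary for Assumption \ref{asp}) is used to guarantee an immediate new maximum.
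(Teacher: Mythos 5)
Your proposal is correct and follows essentially the same route as the paper's own proof: the strong Markov decomposition at the running maximum, the squeeze via Corollary \ref{cor bd}, Assumption \ref{asp} to pass to the first-order linear ODEs $g'=b_f^{(q)}g$ and $h'=-c_f^{(q,s)}+b_f^{(q)}h$ with boundary data $g(K)=1$, $h(K)=0$, and then solving. The regularity caveats you flag (joint continuity of $u\mapsto b_u^{(q)}(x),c_u^{(q,s)}(x)$ to collapse the two-sided bounds, and uniform control of the $o(\varepsilon)$ remainders) are real but are passed over silently in the paper as well, so they do not constitute a divergence in method.
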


\begin{proof}
Let%
\[
g(x)=\mathbb{E}_{x}\left[  e^{-q\tau_{K}^{+}}1_{\{\tau_{K}^{+}<\tau_{f}\}}\right]  ,\quad x<K.
\]
By the strong Markov property of $X$ at maxima, for any $X_{0}=x\leq y<K$ and
$0<\varepsilon<K-y$, we have
\[
g(y)=\mathbb{E}_{y}\left[  e^{-q\tau_{y+\varepsilon}^{+}}1_{\{\tau
_{y+\varepsilon}^{+}<\tau_{f}\}}\right]  g(y+\varepsilon).
\]
By Corollary \ref{cor bd}, it follows that
\[
B^{(q)}(y;f(y+\varepsilon),y+\varepsilon)g(y+\varepsilon)\leq g(y)\leq B^{(q)}(y;f(y),y+\varepsilon)g(y+\varepsilon).
\]
It follows that
\[
\left\{
\begin{array}
[c]{l}%
g(y+\varepsilon)-g(y)\leq\left[  1-B^{(q)}(y;f(y+\varepsilon),y+\varepsilon)\right]  g(y+\varepsilon)\\
g(y+\varepsilon)-g(y)\geq\left[  1-B^{(q)}(y;f(y),y+\varepsilon)\right]  g(y+\varepsilon)
\end{array}
\right.
\]
By Assumption \ref{asp}, it follows that
\[
g^{\prime}(y)=b_{f}^{(q)}(y)g(y),\quad y<K,
\]
with boundary condition $g(K)=1$. Thus,
\[
g(x)=e^{-\int_{x}^{K}b_{f}^{(q)}(z)\mathrm{d}z},\quad x<K.
\]

Similarly, let
\[
h(x)=\mathbb{E}_{x}\left[  e^{-q\tau_{f}-sY_{\tau_{f}}}1_{\{\overline
{X}_{\tau_{f}}\leq K\}}\right]  ,\quad x<K.
\]
By the strong Markov property of $X$ at maxima, for any $X_{0}=x\leq y<K$ and
$0<\varepsilon<K-y$, we have
\[
h(y)=\mathbb{E}_{y}\left[  e^{-q\tau_{f}-sY_{\tau_{f}}}1_{\left\{
\tau_{f}<\tau_{y+\varepsilon}^{+}\right\}  }\right]  +\mathbb{E}%
_{y}\left[  e^{-q\tau_{y+\varepsilon}^{+}}1_{\{\tau_{y+\varepsilon}%
^{+}<\tau_{f}\}}\right]  h(y+\varepsilon).
\]
By Corollary \ref{cor bd}, it follows that%
\[
\left\{
\begin{array}
[c]{c}%
h(y)\leq e^{s(f(y+\varepsilon)-f(y))}C^{(q,s)}(y;f(y+\varepsilon),y+\varepsilon)+B^{(q)}(y;f(y),y+\varepsilon)h(y+\varepsilon),\\
h(y)\geq e^{-s(f(y+\varepsilon)-f(y))}C^{(q,s)}(y;f(y),y+\varepsilon)+B^{(q)}(y;f(y+\varepsilon),y+\varepsilon)h(y+\varepsilon).
\end{array}
\right.
\]
It follows that%
\[
\left\{
\begin{array}
[c]{l}%
h(y+\varepsilon)-h(y)\geq-e^{s(f(y+\varepsilon)-f(y))}C^{(q,s)}(y;f(y+\varepsilon),y+\varepsilon)+\left[  1-B^{(q)}(y;f(y),y+\varepsilon)\right]  h(y+\varepsilon),\\
h(y+\varepsilon)-h(y)\leq-e^{-s(f(y+\varepsilon)-f(y))}C^{(q,s)}(y;f(y),y+\varepsilon)+\left[  1-B^{(q)}%
(y;f(y+\varepsilon),y+\varepsilon)\right]
h(y+\varepsilon).
\end{array}
\right.
\]
By Assumption \ref{asp}, we deduce that
\[
h^{\prime}(y)=-c_{f}^{(q,s)}(y)+b_{f}^{(q)}(y)h(y),\quad y<K,
\]
with boundary condition $h(K)=0$. Therefore,%
\[
h(x)=\int_{x}^{K}e^{-\int_{x}^{y}b_{f}^{(q)}(z)\mathrm{d}z}c_{f}^{(q,s)}(y)\mathrm{d}y,\quad x<K.
\]
This ends the proof.
\end{proof}

\section{Extension to the general loss-carry-forward taxation model}\label{s:tax}

The loss-carry-forward taxation model is first proposed by Albrecher and Hipp \cite{AH07} under the compound Poisson model. It has been extended to the spectrally negative L\'evy model by Albrecher et al. \cite{ARZ08}, the time-homogeneous diffusion model by Li et al. \cite{LTZ13}, and the Markov additive model by Albrecher \cite{AACI14}.

In this section, we will further incorporate the general taxation proposed by Kyprianou and Zhou \cite{KZ09}. As our underlying model is upward skip-free Markov processes, our results will generalize \cite{ARZ08}, \cite{KZ09}, and \cite{LTZ13}. It is worth to mention that the methodologies adopt in these previous works are quite different, while this paper utilizes a unified and also more direct approach.

Consider a loss-carry-forward type tax strategy, where the tax payment is made whenever the surplus process reaches a new running maximum, (e.g., Kyprianou and Zhou \cite{KZ09})
\begin{equation}
\mathrm{d}U_{t}=\mathrm{d}X_{t}-\gamma(\overline{X}_{t})\mathrm{d}\overline
{X}_{t}, \label{taxmodel}
\end{equation}
where $\gamma:[0,\infty)\rightarrow[0,\infty)$ is a measurable function. Note that for $\gamma:[0,\infty)\rightarrow(1,\infty)$, it is heavy-perturbation regime; while for $\gamma:[0,\infty)\rightarrow[0,1)$, it is light-perturbation regime; $\gamma(x)=1_{\{ x\ge a\}}$ corresponds to a reflection strategy, which sits between the previous two regimes, see, e.g., Kyprianou \cite{K14}. In what follows, we only consider the light-perturbation case with a non-decreasing function $\gamma(\cdot)$, and in addition, we assume the following condition holds:
\[
\int_x^{\infty}(1-\gamma(s))ds=\infty.
\]

For $X_{0}=x$, define
\[
\overline{\gamma}_x(y):=y-\int_{x}^{y}\gamma(z)\mathrm{d}z=x+\int_{x}%
^{y}(1-\gamma(z))\mathrm{d}z,\quad y\geq x,
\]
which is strictly increasing and continuous with $\overline{\gamma}(x)=x$, and let $\gamma_x(y):=y-\overline \gamma_x(y)$.

The first passage times of $U$ are defined in the same manner, i.e.,
\[
\tau_{x}^{U,+}=\inf\left\{  t\geq0:U_{t}>x\right\}  \text{ and }\tau_{x}%
^{U,-}=\inf\left\{  t\geq0:U_{t}<x\right\}  .
\]
Note that, conditional on $X_{0}=U_{0}=x$, for any $y\geq x$, we have
\[
\overline{U}_{t}=\overline{\gamma}_x(\overline{X}_{t})\text{ and }\tau_{y}%
^{U,+}=\tau_{\overline{\gamma}^{-1}(y)}^{+}\text{.}%
\]

The general drawdown process of the tax model $U$ is denoted by $Y^U=(Y^U_{t})_{t\geq0}$ with
\[
Y^U_{t}=f(\overline{U}_{t})-U_{t},
\]
where $\overline{U}_{t}=\sup_{0\leq s\leq t}U_{t}$ and $f$ is an increasing
function\ such that%
\[
f(x)<x,\quad\text{for all }x\in%
\mathbb{R}
\text{.}%
\]
Hence, $Y^U_{0}=f(\overline{U}_{0})-U_{0}<0$. The time of general drawdown is
defined by
\[
\sigma_{f}=\inf\left\{  t\geq0:Y^U_{t}>0\right\}  =\inf\left\{  t\geq
0:U_{t}<f(\overline{U}_{t})\right\}  .
\]

Actually, from the general drawdown results for a general model $X$ in Theorem \ref{thm markov}, by noting the pathwise connection between $X$ and $U$, one can easily find the general drawdown results for a general tax model $U$ associated with the time-homogeneous Markov process $X$.

In the following, we first provide some time correspondences between processes $U$ and $X$. Given $X_0=U_0=x$, in the light-perturbation case,
\begin{itemize}

\item[(i)]
\begin{equation}
\tau_b^{U,+}=\tau_{\overline\gamma_x^{-1}(b)}^+,\quad a.s., \label{time-i}
\end{equation}
since $\overline U_t=\overline X_t- \int_{(0,t]} \gamma(\overline X_u)d\overline X_u=\overline\gamma_x(\overline X_t)$; see Equation (10.44) in Kyprianou \cite{K14}.

\item[(ii)] \begin{equation}
\tau_{0}^{U,-}=\tau_{\gamma_x},\quad a.s., \label{time-ii}
\end{equation}
since $U_t=X_t-\gamma_x(\overline X_t)$ and $\{U_t<0\}=\{X_t<\gamma_x(\overline X_t)\}$.

\item[(iii)]
\begin{equation}
\sigma_{f}=\tau_{f^*},\quad a.s., \label{time-iii}
\end{equation}
since $\left\{ U_{t}<f(\overline{U}_{t}) \right\}=\left\{X_t< f(\overline\gamma_x(\overline X_t))+ \gamma_x(\overline X_t)\right\}=\left\{X_t< f^*(\overline X_t)\right\}$, where
\begin{equation}
f^*(z):=f(\overline \gamma_x(z))+\gamma_x(z). \label{f**}
\end{equation}

\end{itemize}

\begin{theorem}
\label{thm tax} Consider an upward skip-free time-homogeneous Markov process
$X$ such that Assumption \ref{asp} holds, and its general tax process $U$ is defined in (\ref{taxmodel}).
For $q,s\geq0$ and $x<K\in
\mathbb{R}$, we have
\begin{align}
\mathbb{E}_x[e^{-q\tau_{K}^{U,+}}1_{\{\tau_K^{U,+}<\sigma_f\}}] &=\exp\left \{ -\int_x^{\overline \gamma_x^{-1}(K)} b_{f^*}^{(q)}(z)dz\right \}, \label{taxexit}\\
\mathbb{E}_{x}\left[  e^{-q\sigma_{f}-sY^U_{\sigma_{f}}}1_{\{\overline
{U}_{\sigma_{f}}\leq K\}}\right]   & =\int_{x}^{\overline\gamma_x^{-1}(K)}e^{-\int_{x}^{y}b_{f^*}^{(q)}(z)\mathrm{d}z}c_{f^*}^{(q,s)}(y)\mathrm{d}y, \label{taxdd}
\end{align}
with $f^*(\cdot)$ given in (\ref{f**}).

\end{theorem}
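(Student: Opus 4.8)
The plan is to reduce both identities to the first-passage results for the \emph{untaxed} process $X$ already obtained in Theorem \ref{thm markov}, applied with the modified drawdown function $f^*$ of (\ref{f**}). The whole point is that the three pathwise correspondences (\ref{time-i})--(\ref{time-iii}) let one rewrite every $U$-quantity on the left-hand sides as an $X$-quantity evaluated at the $X$-drawdown time $\tau_{f^*}$; once this translation is done, nothing further is needed beyond (\ref{exitK}) and (\ref{ddfK}).

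Before invoking Theorem \ref{thm markov} I would check that $f^*$ is an admissible drawdown function for $X$. Using $z=\overline\gamma_x(z)+\gamma_x(z)$ one gets $f^*(z)-z=f(\overline\gamma_x(z))-\overline\gamma_x(z)<0$, since $f(w)<w$ for all $w$. In the light-perturbation regime $\overline\gamma_x$ is strictly increasing and $\gamma_x(z)=\int_x^z\gamma(u)\,du$ is nondecreasing, so $f^*=f\circ\overline\gamma_x+\gamma_x$ is nondecreasing. Since $f^*$ is a fixed deterministic function once $x$ is fixed, and only its values on $[x,\overline\gamma_x^{-1}(K)]$ enter (with $\overline\gamma_x^{-1}(K)>x$ well defined by the assumption $\int_x^\infty(1-\gamma(s))\,ds=\infty$), the proof of Theorem \ref{thm markov} runs verbatim with $f$ replaced by $f^*$. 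For (\ref{taxexit}) I would then use (\ref{time-i}) and (\ref{time-iii}) to write, $\mathbb{P}_x$-a.s., $\tau_K^{U,+}=\tau_{\overline\gamma_x^{-1}(K)}^+$ and $\sigma_f=\tau_{f^*}$, so that
\[
\mathbb{E}_x\!\left[e^{-q\tau_K^{U,+}}1_{\{\tau_K^{U,+}<\sigma_f\}}\right]=\mathbb{E}_x\!\left[e^{-q\tau_{\overline\gamma_x^{-1}(K)}^+}1_{\{\tau_{\overline\gamma_x^{-1}(K)}^+<\tau_{f^*}\}}\right],
\]
and (\ref{taxexit}) is exactly (\ref{exitK}) with upper level $\overline\gamma_x^{-1}(K)$ and drawdown $f^*$.

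The substantive step is the second identity, where one must identify the tax overshoot with the $f^*$-drawdown overshoot of $X$. Writing $U_t=X_t-\gamma_x(\overline X_t)$ and $\overline U_t=\overline\gamma_x(\overline X_t)$ and evaluating at $\sigma_f=\tau_{f^*}$ gives
\[
Y^U_{\sigma_f}=f(\overline U_{\sigma_f})-U_{\sigma_f}=f(\overline\gamma_x(\overline X_{\tau_{f^*}}))+\gamma_x(\overline X_{\tau_{f^*}})-X_{\tau_{f^*}}=f^*(\overline X_{\tau_{f^*}})-X_{\tau_{f^*}},
\]
which is precisely $Y^*_{\tau_{f^*}}$ for the $X$-drawdown process $Y^*_t=f^*(\overline X_t)-X_t$; moreover $\{\overline U_{\sigma_f}\leq K\}=\{\overline X_{\tau_{f^*}}\leq\overline\gamma_x^{-1}(K)\}$ by monotonicity of $\overline\gamma_x$. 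Substituting into the left side of (\ref{taxdd}) turns it into $\mathbb{E}_x[e^{-q\tau_{f^*}-sY^*_{\tau_{f^*}}}1_{\{\overline X_{\tau_{f^*}}\leq\overline\gamma_x^{-1}(K)\}}]$, and (\ref{ddfK}) closes the argument. The main obstacle is therefore one of bookkeeping rather than analysis: correctly propagating the deterministic change of variables $\overline\gamma_x$ through the running maximum, the current position, and the overshoot so that the triple $(\sigma_f,\overline U_{\sigma_f},Y^U_{\sigma_f})$ collapses onto $(\tau_{f^*},\overline X_{\tau_{f^*}},Y^*_{\tau_{f^*}})$; once that collapse is verified, the tax model inherits Theorem \ref{thm markov} directly.
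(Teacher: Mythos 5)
Your proposal is correct and follows essentially the same route as the paper: translate the taxed quantities into untaxed ones via the correspondences (\ref{time-i})--(\ref{time-iii}), identify the triple $(\sigma_f,\overline U_{\sigma_f},Y^U_{\sigma_f})$ with $(\tau_{f^*},\overline\gamma_x(\overline X_{\tau_{f^*}}),f^*(\overline X_{\tau_{f^*}})-X_{\tau_{f^*}})$, and invoke Theorem \ref{thm markov} with $f^*$ in place of $f$. Your explicit check that $f^*$ is an admissible drawdown function and your pathwise (rather than merely distributional) identification of the overshoot are small refinements of the paper's argument, not departures from it.
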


\begin{proof} Using time correspondences (\ref{time-i}) and (\ref{time-iii}), as well as Equation (\ref{exitK}), one finds
\begin{align*}
\mathbb{E}_x[e^{-q\tau_K^{U,+}}1_{\{\tau_K^{U,+}<\sigma_{f}\}}]=&\mathbb{E}_x[e^{-q\tau_{\overline\gamma_x^{-1}(K)}^+}1_{\{\tau_{\overline\gamma_x^{-1}(K)}^+<\tau_{f^*}\}}]=\exp\left \{ -\int_x^{\overline \gamma_x^{-1}(K)} b_{f^*}^{(q)}(z)dz\right \},
\end{align*}
which proves (\ref{taxexit}).

Similarly, noting $\left\{\sigma_{f}, Y^U_{\sigma_{f}}, \overline{U}_{\sigma_{f}} \right\}\stackrel{d}{=} \left\{\tau_{f^*}, Y_{\tau_{f^*}}, \overline \gamma_x(\overline {X}_{f^*})  \right\}$, we have
\begin{align*}
\mathbb{E}_{x}\left[  e^{-q\sigma_{f}-sY^U_{\sigma_{f}}}1_{\{\overline
{U}_{\sigma_{f}}\leq K\}}\right]&=\mathbb{E}_{x}\left[e^{-q\tau_{f^*}-sY_{\tau_{f^*}}}1_{\{\overline \gamma_x(\overline {X}_{f^*})\leq K\}}\right]\\
& =\mathbb{E}_{x}\left[e^{-q\tau_{f^*}-sY_{\tau_{f^*}}}1_{\{\overline {X}_{f^*}\leq\overline \gamma_x^{-1}(K)\}}\right]\\
&=\int_{x}^{\overline\gamma_x^{-1}(K)}e^{-\int_{x}^{y}b_{f^*}^{(q)}(z)\mathrm{d}z}c_{f^*}^{(q,s)}(y)\mathrm{d}y.
\end{align*}

\end{proof}

In the following proposition, we provide the results relating to the expected present value of tax up to some certain stopping times. We denote $\eta(\cdot)$ as a general tax payment function, which depends on the surplus level at the moment of paying tax.

\begin{proposition}\label{prop1}
For $x<K$ and any function $\eta(\cdot)>0$, the expected present value of tax until general drawdown or exiting above is
\begin{align*}
&\mathbb{E}_x\left[\int_0^{\tau_K^{U,+}\wedge \sigma_{f}}e^{-q u}\eta(\overline X_u) d\overline X_u\right]=\int_x^{\overline\gamma_x^{-1}(K)}\eta(y) \exp\left \{ -\int_x^{y} b_{f^*}^{(q)}(z)dz\right \}dy,
\end{align*}
and the expected present value of tax until reaching level $K$ before general drawdown is
\begin{align*}
&\mathbb{E}_x\left[\int_0^{\tau_K^{U,+}}e^{-q u}\eta(\overline X_u) d\overline X_u1_{\{\tau_K^{U,+}<\sigma_{f}\}}\right]=\int_x^{\overline\gamma_x^{-1}(K)}\eta(y)  \exp\left \{ -\int_x^{y} b_{f^*}^{(q)}(z)dz -\int_y^{\overline\gamma_x^{-1}(K)} b_{f^*}^{(0)}(z)dz\right \}dy.
\end{align*}
\end{proposition}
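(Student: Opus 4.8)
The plan is to reduce both identities to Theorem \ref{thm markov} by a pathwise change of variables in the tax integral, exploiting that for an upward skip-free process the running maximum $\ol X$ increases only continuously. First I would observe that the discounted tax can be rewritten as an integral over the maximum level rather than over time: since $\ol X$ is continuous and nondecreasing, the Stieltjes measure $d\ol X_u$ is carried by the increase times of $\ol X$, at each of which $u=\tau_y^+$ with $y=\ol X_u$, so pushing forward by $y=\ol X_u$ gives
\[
\int_0^{T}e^{-qu}\eta(\ol X_u)\,d\ol X_u=\int_x^{\ol X_{T}}e^{-q\tau_y^+}\eta(y)\,dy
\]
for any stopping time $T$. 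Taking $T=\tau_K^{U,+}\wedge\sigma_f$ and using the time correspondences (\ref{time-i}) and (\ref{time-iii}), namely $\tau_K^{U,+}=\tau_{\ol\gamma_x^{-1}(K)}^+$ and $\sigma_f=\tau_{f^*}$, transports everything to the untaxed process $X$ with the modified barrier $f^*$.

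For the first identity I would then take expectations and interchange $\mathbb{E}_x$ with the $y$-integral by Tonelli (legitimate since $\eta>0$), obtaining $\int_x^{\infty}\eta(y)\,\mathbb{E}_x[e^{-q\tau_y^+}1_{\{\tau_y^+\le T\}}]\,dy$. Because $\tau_y^+$ is nondecreasing in $y$, the event $\{\tau_y^+\le T\}$ is null for $y>\ol\gamma_x^{-1}(K)$ and coincides with $\{\tau_y^+<\tau_{f^*}\}$ for $y\le\ol\gamma_x^{-1}(K)$; applying (\ref{exitK}) with $K$ replaced by $y$ and $f$ by $f^*$ evaluates the inner expectation as $\exp\{-\int_x^y b_{f^*}^{(q)}(z)\,dz\}$, which is exactly the claimed formula.

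For the second identity the only change is the extra indicator $1_{\{\tau_K^{U,+}<\sigma_f\}}=1_{\{\tau_{\ol\gamma_x^{-1}(K)}^+<\tau_{f^*}\}}$; on this event $\ol X_{\tau_K^{U,+}}=\ol\gamma_x^{-1}(K)=:M$, so the change of variables truncates the $y$-integral at $M$ and leaves
\[
\int_x^{M}\eta(y)\,\mathbb{E}_x\!\left[e^{-q\tau_y^+}1_{\{\tau_M^+<\tau_{f^*}\}}\right]dy .
\]
The key step is to split this expectation at $\tau_y^+$ via the strong Markov property at maxima (as in the proof of Theorem \ref{thm markov}): at $\tau_y^+$ the process sits at a fresh maximum $X=\ol X=y$, so the factor up to $\tau_y^+$ contributes $\mathbb{E}_x[e^{-q\tau_y^+}1_{\{\tau_y^+<\tau_{f^*}\}}]$, while the continuation from $y$ to $M$ contributes the \emph{undiscounted} survival probability $\mathbb{P}_y(\tau_M^+<\tau_{f^*})$. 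Evaluating the first factor by (\ref{exitK}) and the second by (\ref{exitK}) with $q=0$ produces the nested exponential $\exp\{-\int_x^y b_{f^*}^{(q)}(z)\,dz-\int_y^M b_{f^*}^{(0)}(z)\,dz\}$, as required.

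I expect the only delicate point to be the justification of the change of variables together with the accompanying Tonelli interchange: one must argue that $\ol X$ has no jumps (which is precisely upward skip-freeness) so that $d\ol X_u$ maps to Lebesgue measure $dy$ under $u\mapsto\ol X_u$ with $u=\tau_y^+$, and that the strong Markov splitting is valid because at $\tau_y^+$ the running maximum equals the spatial position, whence the post-$\tau_y^+$ drawdown clock coincides with the global one.
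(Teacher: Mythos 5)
Your proof is correct and takes essentially the same route as the paper's: reduce to $(X,f^*)$ via the time correspondences, convert the tax integral into a level integral $dy$ using continuity of $\overline X$, apply Tonelli, and evaluate the inner expectation by (\ref{exitK}) — with the strong Markov splitting at $\tau_y^+$ producing the undiscounted factor $\mathbb{P}_y(\tau_{\overline\gamma_x^{-1}(K)}^+<\tau_{f^*})$ for the second identity. The only cosmetic difference is that you carry the discount $e^{-q\tau_y^+}$ directly through the time change, whereas the paper replaces discounting by killing at an independent exponential time $e_q$; the two devices are equivalent here.
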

\begin{proof} Thanks to the path/time correspondences in (\ref{time-i})-(\ref{time-iii}), we have
\begin{align*}
\mathbb{E}_x\left[\int_0^{\tau_K^{U,+}\wedge \sigma_{f}}e^{-q u}\eta(\overline X_u) d\overline X_u\right]= &\mathbb{E}_x\left[\int_0^{\tau_{\overline\gamma_x^{-1}(K)}^+\wedge\tau_{f^*}\wedge e_q }\eta(\overline X_u) d\overline X_u\right]\\
 =&\int_x^{\infty}\int_{x}^z \eta(y)dy \mathbb{P}_x(\overline X(\tau_{\overline\gamma_x^{-1}(K)}^+\wedge\tau_{f^*}\wedge e_q )\in dz)\\
 =&\int_x^{\overline\gamma_x^{-1}(K)}\eta(y)  \mathbb{P}_x(\overline X(\tau_{f^*}\wedge e_q )>y)dy \nonumber \\
=&\int_x^{\overline\gamma_x^{-1}(K)}\eta(y)  \mathbb{P}_x(\tau_y^+<\tau_{f^*}\wedge e_q)dy\\
=&\int_x^{\overline\gamma_x^{-1}(K)}\eta(y)  \mathbb{E}_x(e^{-q\tau_y^+}1_{\{ \tau_y^+<\tau_{f^*}\}})dy\\
=&\int_x^{\overline\gamma_x^{-1}(K)}\eta(y) \exp\left \{ -\int_x^{y} b_{f^*}^{(q)}(z)dz\right \}dy,
\end{align*}
and
\begin{align*}
\mathbb{E}_x\left[\int_0^{\tau_K^{U,+}}e^{-q u}\eta(\overline X_u) d\overline X_u1_{\{\tau_K^{U,+}<\sigma_{f}\}}\right]=&\mathbb{E}_x\left[\int_0^{\tau_{\overline\gamma_x^{-1}(K)}^+\wedge e_q }\eta(\overline X_u) d\overline X_u1_{\{\tau_{\overline\gamma_x^{-1}(K)}^+< \tau_{f^*} \}}\right]\\
 =&\int_x^{\infty}\int_{x}^z \eta(y)dy \mathbb{P}_x[\overline X(\tau_{\overline\gamma_x^{-1}(K)}^+\wedge e_q)\in dz, \tau_{\overline\gamma_x^{-1}(K)}^+< \tau_{f^*} ]\\
 =&\int_x^{\infty}\eta(y)  \mathbb{P}_x[\overline X(\tau_{\overline\gamma_x^{-1}(K)}^+\wedge e_q)>y, \tau_{\overline\gamma_x^{-1}(K)}^+< \tau_{f^*}]dy \nonumber \\
=&\int_x^{\infty}\eta(y)  \mathbb{P}_x(\tau_y^+<\tau_{\overline\gamma_x^{-1}(K)}^+\wedge\tau_{f^*} \wedge e_q)\mathbb{P}_y ( \tau_{\overline\gamma_x^{-1}(K)}^+< \tau_{f^*})dy\\
=&\int_x^{\overline\gamma_x^{-1}(K)}\eta(y)  \exp\left \{ -\int_x^{y} b_{f^*}^{(q)}(z)dz\right \}\exp\left \{ -\int_y^{\overline\gamma_x^{-1}(K)} b_{f^*}^{(0)}(z)dz\right \}dy,
\end{align*}
which completes the proof.
\end{proof}

\begin{remark} In a special case with $\gamma$ being a constant, we have
\begin{equation*}
\overline \gamma_x(y)=y-\gamma(y-x), \mbox{ and } \overline \gamma_x^{-1}(y)=\frac{y-\gamma x}{1-\gamma},
\end{equation*}
and
\begin{equation*}
f^*(z)=f(\overline \gamma_x(z))+z- \overline \gamma_x(z)=f(z-\gamma(z-x))+\gamma(z-x).
\end{equation*}
Rewriting (\ref{taxexit}) using a change of variable,
\[
\mathbb{E}_x\left [e^{-q\tau_{K}^{U,+}}1_{\{\tau_K^{U,+}<\sigma_f\}}\right]=\exp\left \{ -\int_x^{\overline \gamma_x^{-1}(K)} b_{f^*}^{(q)}(z)\mathrm{d}z\right \}=\exp\left \{ -\int_x^{K}\frac{1}{1-\gamma} b_{f^*}^{(q)}(\overline \gamma_x^{-1}(y))\mathrm{d}y\right \},
\]
 Introducing
 \begin{equation*}
  W_{f^*}^{(q)}(z)=e^{\int_{z_0}^{z}b_{f^*}^{(q)}(\overline \gamma_x^{-1}(y))\mathrm{d}y} \Leftrightarrow b_{f^*}^{(q)}(\overline \gamma_x^{-1}(z))= \frac{W_{f^*}^{(q)\prime}(z)}{W_{f^*}^{(q)}(z)},
\end{equation*}
for some fixed $x_0 \leq K$, and we may rewrite (\ref{taxexit}) as
\begin{equation*}
\mathbb{E}_{x}\left[  e^{-q\tau_K^{U,+}}1_{\{ \tau_K^{U,+} < \sigma_{f} \}}\right]=\left(\frac{W_{f^*}^{(q)}(x)}{W_{f^*}^{(q)}(K)}\right)^{\frac 1{1-\gamma}}.
\end{equation*}
Thus, the multiplicative structure 
is still present {with  generalized  drawdown  times}, and tax introduces an extra power, see, e.g., \cite{AH07} and \cite{AI14}.

\end{remark}

\section{Examples}

In this section, we consider the Spectrally Negative L\'evy process, time-homogeneous diffusion process and Ornstein-Uhlenbeck process with exponential jumps for specific examples. These processes are of particular interests thanks to their various applications in insurance and finance.

\subsection{Spectrally negative L\'evy process}

Consider a spectrally negative L\'{e}vy process $X$. Let $\psi(s):=\frac{1}%
{t}\log\mathbb{E}[e^{sX_{t}}]$, $ s\geq0$, be the Laplace
exponent of $X$. Further, let $W^{(q)}:%
\mathbb{R}
\rightarrow\lbrack0,\infty)$ be the well-known $q$-scale function of $X$. The
second scale function is defined as $Z^{(q)}(x)=1+q\int_{0}^{x}W^{(q)}%
(y)\mathrm{d}y$. We assume the scale functions are continuously
differentiable. For $p=q-\psi(s)$, let $W_{s}^{(p)}$ ($Z_{s}^{(p)}$) be the
(second) scale function of $X$ under a new probability measure\ $\mathbb{P}%
^{s}$ defined by the Radon-Nikodym derivative process $\left.  \frac
{\mathrm{d}\mathbb{P}^{s}}{\mathrm{d}\mathbb{P}}\right\vert _{\mathcal{F}_{t}%
}=e^{sX_{t}-\psi(s)t}$ for $t\geq0$. Recall that
\[
B^{(q)}(x;u,v)=\mathbb{E}_{x}\left[  e^{-q\tau_{v}^{+}}1_{\left\{  \tau_{v}%
^{+}<\infty,\tau_{v}^{+}<\tau_{u}^{-}\right\}  }\right]  =\frac{W^{(q)}%
(x-u)}{W^{(q)}(v-u)},
\]
and%
\[
C^{(q,s)}(x;u,v)=\mathbb{E}_{x}\left[  e^{-q\tau_{u}^{-}-s(u-X_{\tau_{u}^{-}}%
)}1_{\left\{  \tau_{u}^{-}<\infty,\tau_{u}^{-}<\tau_{v}^{+}\right\}  }\right]
=Z_{s}^{(p)}(x-u)-Z_{s}^{(p)}(v-u)\frac{W_{s}^{(p)}(x-u)}{W_{s}^{(p)}(v-u)}.%
\]

It is direct to check that Assumption \ref{asp} is satisfied. More
specifically,
\begin{align*}
b_{f}^{(q)}(x)  =-\left.  \frac{\partial B^{(q)}(x;f(x),v)}{\partial v}\right\vert _{v=x}=\frac{W^{(q)\prime}(x-f(x))}{W^{(q)}(x-f(x))},
\end{align*}
and
\begin{align*}
c_{f}^{(q,s)}(x)  & =\left.  \frac{\partial C^{(q,s)}(x;f(x),v)}%
{\partial v}\right\vert _{v=x} =Z_{s}^{(p)}(x-f(x))\frac{W_{s}^{(p)\prime}(x-f(x))}{W_{s}^{(p)}%
(x-f(x))}-Z_{s}^{(p)\prime}(x-f(x)).
\end{align*}
Then Theorem \ref{thm markov} implies, for $x\leq K$,
\[
\mathbb{E}_{x}\left[  e^{-q\tau_{f}-sY_{\tau_{f}}}1_{\{\tau_{f}<\infty
,\overline X_{\tau_{f}}\leq K\}}\right]  =\int_{x}^{K}e^{-\int_{x}^{y}\frac
{W^{(q)\prime}(z-f(z))}{W^{(q)}(z-f(z))}\mathrm{d}z}\left(  Z_{s}%
^{(p)}(y-f(y))\frac{W_{s}^{(p)\prime}(y-f(y))}{W_{s}^{(p)}(y-f(y))}%
-Z_{s}^{(p)\prime}(y-f(y))\right)  \mathrm{d}y,
\]
which is consistent with Proposition 3.1 in \cite{LVZ17}.

In particular, suppose that
\[
f(x)=\xi x-d\text{,}%
\]
where $\xi \leq1$ and $d>0$ are two fixed constants. One has a simplified
formula because
\[
e^{-\int_{x}^{y}\frac{W^{(q)\prime}(z-f(z))}{W^{(q)}(z-f(z))}\mathrm{d}%
z}=e^{-\int_{x}^{y}\frac{W^{(q)\prime}((1-\xi)z+d)}{W^{(q)}((1-\xi)z+d)}%
\mathrm{d}z}=e^{-\frac{1}{1-\xi}\ln W^{(q)}((1-\xi)z+d)|_{z=x}^{z=y}}=\left(
\frac{W^{(q)}((1-\xi)x+d)}{W^{(q)}((1-\xi)y+d)}\right)  ^{\frac{1}{1-\xi}}.
\]

Below is a direct corollary from Theorem \ref{thm tax} and Proposition \ref{prop1}.
\begin{corollary}\label{cor2}
For $x<K$ and any function $\eta(\cdot)$,
\begin{align*}
\mathbb{E}_x\left[e^{-q\tau_{K}^{U,+}}1_{\{\tau_K^{U,+}<\sigma_f\}}\right] &=\exp\left \{ -\int_x^{\overline \gamma_x^{-1}(K)} \frac{W^{(q)\prime}(\overline f(\overline \gamma_x(t)))}{W^{(q)}(\overline f(\overline \gamma_x(t)))} dt \right \},
\end{align*}
\begin{align*}
&\mathbb{E}_x\left[\int_0^{\tau_K^{U,+}\wedge \sigma_{f}}e^{-q u}\eta(\overline X_u) d\overline X_u\right]=\int_x^{\overline\gamma_x^{-1}(K)}\eta(y)  \exp\left(-\int_x^{y}\frac{W^{(q)\prime}(\overline f(\overline \gamma_x(t)))}{W^{(q)}(\overline f(\overline \gamma_x(t)))} dt\right)dy,
\end{align*}
and
\begin{align*}
&\mathbb{E}_x\left[\int_0^{\tau_K^{U,+}}e^{-q u}\eta(\overline X_u) d\overline X_u1_{\{\tau_K^{U,+}<\sigma_{f}\}}\right]\\
=&\int_x^{\overline\gamma_x^{-1}(K)}\eta(y)  \exp\left(-\int_x^{y}\frac{W^{(q)\prime}(\overline f(\overline \gamma_x(t)))}{W^{(q)}(\overline f(\overline \gamma_x(t)))} dt-\int_y^{\overline\gamma_x^{-1}(K)}\frac{W^{\prime}(\overline f({\overline\gamma_x}(t))}{W(\overline f({\overline\gamma_x}(t))} dt\right)dy,
\end{align*}
where $\overline f(x)=x-f(x)$.
\end{corollary}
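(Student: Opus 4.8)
The plan is to obtain Corollary \ref{cor2} as a pure specialization: the general tax formulas of Theorem \ref{thm tax} and Proposition \ref{prop1} are already stated in terms of the differential exit parameter $b_{f^*}^{(q)}$, so it suffices to insert the explicit spectrally negative L\'evy form of this parameter and simplify. Recall from the computation preceding the corollary that for a spectrally negative L\'evy process and \emph{any} admissible boundary $g$ one has
\[
b_{g}^{(q)}(z)=\frac{W^{(q)\prime}(z-g(z))}{W^{(q)}(z-g(z))},
\]
since this follows directly from $B^{(q)}(z;g(z),v)=W^{(q)}(z-g(z))/W^{(q)}(v-g(z))$ upon differentiating in $v$ at $v=z$. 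Applying this with $g=f^*$ reduces everything to controlling the argument $z-f^*(z)$.

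The key step is the algebraic identity
\[
z-f^*(z)=\overline f(\overline\gamma_x(z)).
\]
Indeed, using $\gamma_x(z)=z-\overline\gamma_x(z)$ together with the definition (\ref{f**}) of $f^*$,
\[
z-f^*(z)=z-f(\overline\gamma_x(z))-\gamma_x(z)=\overline\gamma_x(z)-f(\overline\gamma_x(z))=\overline f(\overline\gamma_x(z)),
\]
where $\overline f(m)=m-f(m)$. Consequently
\[
b_{f^*}^{(q)}(z)=\frac{W^{(q)\prime}(\overline f(\overline\gamma_x(z)))}{W^{(q)}(\overline f(\overline\gamma_x(z)))},
\]
and the same computation with $q=0$ gives $b_{f^*}^{(0)}(z)=W^{\prime}(\overline f(\overline\gamma_x(z)))/W(\overline f(\overline\gamma_x(z)))$, where $W=W^{(0)}$.

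With this identity in hand, the three displays follow by direct substitution: plugging $b_{f^*}^{(q)}$ into (\ref{taxexit}) yields the first, inserting it into the first formula of Proposition \ref{prop1} yields the second, and using both $b_{f^*}^{(q)}$ and $b_{f^*}^{(0)}$ in the second formula of Proposition \ref{prop1} yields the third (after renaming the integration variable $z$ to $t$). I expect no genuine analytic obstacle here; the entire content sits in the bookkeeping identity $z-f^*(z)=\overline f(\overline\gamma_x(z))$, and the only point deserving care is to confirm that the spectrally negative L\'evy expression for $b_g^{(q)}$ applies to the \emph{tax-adjusted} boundary $f^*$, which need not be of the simple affine type, so that the substitution $g\mapsto f^*$ is legitimate.
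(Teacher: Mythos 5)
Your proof is correct and takes essentially the same route as the paper, which gives no written argument and simply declares the result ``a direct corollary from Theorem \ref{thm tax} and Proposition \ref{prop1}''; your identity $z-f^*(z)=\overline f(\overline\gamma_x(z))$ is exactly the bookkeeping step the paper leaves implicit (it surfaces only in the notation $\overline f(\overline\gamma_x(t))$ of the corollary and the remark after it). Your closing caution is also well resolved: the paper's derivation of $b_f^{(q)}(x)=W^{(q)\prime}(x-f(x))/W^{(q)}(x-f(x))$ in Section 4.1 is valid for any admissible boundary, and $f^*$ is admissible because $z-f^*(z)=\overline f(\overline\gamma_x(z))>0$ and $f^*$ is nondecreasing in the light-perturbation regime.
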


\begin{remark}
In the special case, where $\gamma(\cdot)=\gamma$ and $f(s)=\xi s-d$, we have
\[
\overline \gamma_x(s)= s-\gamma s+\gamma x, \qquad \overline f(s)=(1-\xi) s+d,
\]
\[
\overline f(\overline \gamma_x(t))=\overline{\gamma_x+f(\overline \gamma_x)}=(1-\xi)(s-\gamma s+\gamma x)+d.
\]
Hence, Corollary \ref{cor2} reduces to
\begin{align*}
&\mathbb{E}_x\left[e^{-q\tau_K^{U,+}}1_{\{\tau_K^{U,+}<\sigma_{f}\}}\right]=\exp\left(-\frac{1}{(1-\xi)(1-\gamma)}\int_{(1-\xi)x+d}^{(1-\xi)K+d}\frac{W^{(q)\prime}(y)}{W^{(q)}(y)} dy\right)=\left( \frac{W^{(q)}((1-\xi)x+d)}{W^{(q)}((1-\xi)K+d)} \right)^{\frac{1}{(1-\xi)(1-\gamma)}},
\end{align*}
and furthermore, by letting $\eta(\cdot)=1$, we have
\begin{align*}
&\mathbb{E}_x\left[\int_0^{\tau_K^{U,+}\wedge \sigma_{f}}e^{-q u}d\overline X_u\right]=\frac{1}{1-\gamma}\int_x^{K}  \left( \frac{W^{(q)}((1-\xi)x+d)}{W^{(q)}((1-\xi)z+d)} \right)^{\frac{1}{(1-\xi)(1-\gamma)}} dz,
\end{align*}
and
\begin{align*}
&\mathbb{E}_x\left[\int_0^{\tau_K^{U,+}}e^{-q u}\gamma(\overline X_u) d\overline X_u;{\tau_K^{U,+}<\sigma_{f}}\right]=\frac{1}{1-\gamma}\int_x^{K}  \left( \frac{W^{(q)}((1-\xi)x+d)}{W^{(q)}((1-\xi)z+d)}\frac{W((1-\xi)z+d)}{W((1-\xi)a+d)} \right)^{\frac{1}{(1-\xi)(1-\gamma)}} dz.
\end{align*}
which are consistent with Theorems 1.1 and 1.2 in \cite{AVZ17} respectively.

\end{remark}

\subsection{Time-homogeneous diffusion process}

Consider a linear diffusion process $X$ of the form
\[
\mathrm{d}X_{t}=\mu(X_{t})\mathrm{d}t+\sigma(X_{t})\mathrm{d}B_{t},
\]
where $(B_{t})_{t\geq0}$ is a standard Brownian motion, and the drift term
$\mu(\cdot)$ and local volatility $\sigma(\cdot)>0$ satisfy the usual
Lipschitz continuity and linear growth conditions. The infinitesimal generator
of $X$ is given by
\[
\mathcal{L}_X=\frac{1}{2}\sigma^{2}(x)\frac{\mathrm{d}^{2}}{\mathrm{d}x^{2}}%
+\mu(x)\frac{\mathrm{d}}{\mathrm{d}x}.
\]
It is well-known that, for any $q>0$, there exist two independent and positive
solutions, denoted as $\phi_{q}^{\pm}(y)$, to the Sturm-Liouville equation
\begin{equation}
\mathcal{L}_X\phi_{q}^{\pm}(y)=q\phi_{q}^{\pm}(y), \label{SL}%
\end{equation}
where $\phi_{q}^{+}(\cdot)$ is strictly increasing and $\phi_{q}^{-}(\cdot)$
is strictly decreasing.

Thanks to $\phi_{q}^{\pm}(y)$, it is known that
\[
B^{(q)}(x;u,v)=\mathbb{E}_{x}\left[  e^{-q\tau_{v}^{+}}1_{\left\{  \tau_{v}^{+}<\tau_{u}^{-}\right\}  }\right]  =\frac{\Phi_{q}(u,x)}%
{\Phi_{q}(u,v)},
\]
and%
\[
C^{(q,s)}(x;u,v)=\mathbb{E}_{x}\left[  e^{-q\tau_{u}^{-}}1_{\left\{\tau_{u}^{-}<\tau_{v}^{+}\right\}  }\right]  =\frac{\Phi_{q}(x,v)}%
{\Phi_{q}(u,v)},
\]
where $\Phi_{q}(x,y):=\phi_{q}^{+}(x)\phi_{q}^{-}(y)-\phi_{q}^{+}(y)\phi
_{q}^{-}(x)$. Note that $C^{(q,s)}(x;u,v)$ does not depend on the argument $s$ since the diffusion process has $X_{\tau_u^-}=u$ a.s.

Then Assumption \ref{asp} is satisfied, and we have
\[
b_{f}^{(q)}(x)= -\left.  \frac{\partial B^{(q)}(x;f(x),v)}{\partial v}\right\vert _{v=x}%
=\frac{\Phi_{q,2}(f(x),x)}{\Phi_{q}(f(x),x)},
\]
and
\[
c_{f}^{(q,s)}(x)=\left.  \frac{\partial C^{(q,s)}(x;f(x),v)}%
{\partial v}\right\vert _{v=x}=\frac{\Phi_{q,2}(x,x)}{\Phi_{q}(f(x),x)}=-\frac{\Phi
_{q,1}(x,x)}{\Phi_{q}(f(x),x)},
\]
where $\Phi_{q,1}(x,y):=\frac{\partial}{\partial x} \Phi_{q}(x,y)$ and $\Phi_{q,2}(x,y):=\frac{\partial}{\partial y} \Phi_{q}(x,y)$. Notice the fact that $\Phi_{q,2}(x,x)=-\Phi_{q,1}(x,x)$.
\begin{corollary} \label{cor diff}
For $q,s\geq0$ and $x<K\in
\mathbb{R}$, we have
\begin{align*}
\mathbb{E}_{x}\left[  e^{-q\tau_{K}^{+}}1_{\{\tau_{K}^{+}<\tau_{f}%
\}}\right]    & =e^{-\int_{x}^{K}\frac{\Phi_{q,2}(f(z),z)}{\Phi_{q}(f(z),z)}\mathrm{d}z}, \\
\mathbb{E}_{x}\left[  e^{-q\tau_{f}}1_{\{\overline
{X}_{\tau_{f}}\leq K\}}\right]    & =\int_{x}^{K}e^{-\int_{x}^{y}\frac{\Phi_{q,2}(f(z),z)}{\Phi_{q}(f(z),z)}\mathrm{d}z}\frac{\Phi_{q,2}(x,x)}{\Phi_{q}(f(x),x)}\mathrm{d}y.
\end{align*}

\end{corollary}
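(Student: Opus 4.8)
The plan is to obtain both identities as the direct specialization of Theorem \ref{thm markov} to the diffusion $X$, once the two differential parameters $b_f^{(q)}$ and $c_f^{(q,s)}$ have been read off from the explicit exit transforms displayed above. Almost all of the work is therefore front-loaded into (i) verifying that Assumption \ref{asp} holds and (ii) differentiating the closed forms $B^{(q)}(x;u,v)=\Phi_q(u,x)/\Phi_q(u,v)$ and $C^{(q,s)}(x;u,v)=\Phi_q(x,v)/\Phi_q(u,v)$ in $v$ at $v=x$; after that the corollary is a one-line substitution.

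For (i)--(ii), I would fix $u=f(x)$ and apply the quotient rule in $v$. Since $\phi_q^{\pm}$ are $C^2$ solutions of the Sturm--Liouville equation \eqref{SL}, the kernel $\Phi_q(x,y)=\phi_q^+(x)\phi_q^-(y)-\phi_q^+(y)\phi_q^-(x)$ is smooth in both arguments and $\Phi_q(f(x),x)>0$ for $f(x)<x$, so both quotients are differentiable in $v$ at $v=x$ and Assumption \ref{asp} holds. Differentiating $B^{(q)}$ gives
\[
\left.\frac{\partial}{\partial v}\frac{\Phi_q(f(x),x)}{\Phi_q(f(x),v)}\right|_{v=x}=-\frac{\Phi_{q,2}(f(x),x)}{\Phi_q(f(x),x)},
\]
hence $b_f^{(q)}(x)=\Phi_{q,2}(f(x),x)/\Phi_q(f(x),x)$. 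Differentiating $C^{(q,s)}$ by the quotient rule produces two terms, but the crucial simplification is the antisymmetry $\Phi_q(y,x)=-\Phi_q(x,y)$, which forces $\Phi_q(x,x)=0$; this annihilates the term carrying the factor $\Phi_q(x,v)$ at $v=x$ and leaves $c_f^{(q,s)}(x)=\Phi_{q,2}(x,x)/\Phi_q(f(x),x)$, with $\Phi_{q,2}(x,x)=-\Phi_{q,1}(x,x)$ obtained by differentiating the same antisymmetry relation along the diagonal.

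The remaining point specific to diffusions is the disappearance of the parameter $s$. Because $X$ has continuous paths, it meets the lower drawdown boundary without overshoot, so $X_{\tau_f}=f(\overline X_{\tau_f})$ and therefore $Y_{\tau_f}=f(\overline X_{\tau_f})-X_{\tau_f}=0$ almost surely; equivalently $C^{(q,s)}$ is already free of $s$ since $u-X_{\tau_u^-}=0$. Thus $e^{-sY_{\tau_f}}=1$ and $c_f^{(q,s)}$ does not depend on $s$. Substituting these expressions for $b_f^{(q)}$ and $c_f^{(q,s)}$ into \eqref{exitK} and \eqref{ddfK} then yields the two stated formulas.

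The only step that needs genuine care is the derivative of $C^{(q,s)}$: without the observation that $\Phi_q$ vanishes on the diagonal one would be left with an apparently messier expression, and it is precisely this vanishing (a Wronskian-type identity for the pair $\phi_q^{\pm}$) that produces the clean numerator $\Phi_{q,2}(x,x)$. Confirming the differentiability required by Assumption \ref{asp} is then routine given the smoothness of $\phi_q^{\pm}$.
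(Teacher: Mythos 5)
Your proposal is correct and takes essentially the same route as the paper: the paper's justification of this corollary is precisely the computation, carried out in the text just before it, of $b_f^{(q)}(x)=\Phi_{q,2}(f(x),x)/\Phi_q(f(x),x)$ and $c_f^{(q,s)}(x)=\Phi_{q,2}(x,x)/\Phi_q(f(x),x)$ from the diffusion exit formulas (using the diagonal vanishing $\Phi_q(x,x)=0$ and the absence of overshoot, exactly as you do), followed by substitution into Theorem \ref{thm markov}. One small bonus of performing the substitution as literally as you describe: the integrand factor in \eqref{ddfK} is $c_f^{(q,s)}(y)=\Phi_{q,2}(y,y)/\Phi_q(f(y),y)$, evaluated at the integration variable $y$, which reveals that the paper's displayed factor $\Phi_{q,2}(x,x)/\Phi_q(f(x),x)$ (constant in $y$) is a typo.
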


\begin{remark}
In the special case, with proper choices of $q$ and $x=0$, it is easy to check that the results in Corollary \ref{cor diff} are consistent with Equations (20) and (21) in \cite{L77}.
\end{remark}

\subsection{Ornstein-Uhlenbeck process with exponential jumps}
Consider a generalized Ornstein-Uhlenbeck process $X$ with negative jumps, where
\[
\mathrm{d}X_{t}=\theta(\mu-X_t)\mathrm{d}t +\sigma \mathrm{d}B_t-\mathrm{d}\left(\sum_{i=1}^{N_t}P_i\right),
\]
where $\theta>0$, $\mu\in \mathbb{R}$ and $X_0=x$. Also, $(B_{t})_{t\geq0}$ is a standard Brownian motion, and $\sum_{i=1}^{N_t}P_i$ is an independent compound Poisson process. In particular, we assume the Poisson process $(N_t)_{t\geq0}$ has intensity $\lambda$, and the jumps follow the exponential distribution with mean $1/\eta$. Note that one could rewrite the process $X$ as
\begin{align*}
X_t&=X_0-\theta\int_0^{t}X_s ds+K_t,\\
X_t&=X_0e^{-\theta t} +e^{-\theta t} \int_0^t e^{\theta s} d K_s,
\end{align*}
with $K_t=(\theta \mu) t +\sigma B_t -\sum_{i=1}^{N_t}P_i$ being a Brownian perturbed Cram\'er-Lundberg process, whose Laplace exponent is
\[
\psi(s):=\frac{1}{t}\log \mathbb{E}[e^{sK_t}]=\theta \mu s +\frac{\sigma^2}{2} s^2+\lambda(\frac{\eta}{\eta+s}-1).
\]

From Lemmas 2.1 and 2.2 in \cite{ZWB17}, where the authors examined the occupation times of Ornstein-Uhlenbeck process with two-sided exponential jumps, we have the following results:
\begin{align}
\mathbb{E}_x\left[ e^{-q\tau_u^-}1_{\{X_{\tau_u^-}=u\}} \right]&=\frac{C_2^{q}(u)F_1^{q}(x)-C_1^{q}(u)F_2^{q}(x)}{C_2^{q}(u)F_1^{q}(u)-C_1^{q}(u)F_2^{q}(u)}=:I_1(x,u),\label{F1}\\
\mathbb{E}_x\left[ e^{-q\tau_u^--s(u-X_{\tau_u^-})}1_{\{X_{\tau_u^-}<u\}}\right]&=\frac{F_1^{q}(u)F_2^{q}(x)-F_2^{q}(u)F_1^{q}(x)}{C_2^{q}(u)F_1^{q}(u)-C_1^{q}(u)F_2^{q}(u)}\frac{\eta}{\eta+s}=:I_2(x,u)\frac{\eta}{\eta+s},\label{F2}\\
\mathbb{E}_x\left[ e^{-q\tau_v^+}\right]&=\frac{F_3^{q}(x)}{F_3^{q}(v)}, \label{F3}
\end{align}
where
\begin{align*}
&\phi_q(x):=|x|^{\frac{q}{\theta}-1}e^{-\frac{\sigma^2}{4\theta}x^2+\mu x}|x-\eta |^{\frac{\lambda}{\theta}},\quad F_i^{q}(x):=\int_{\Gamma_i} \phi_q(z)e^{-xz} dz, \quad C_i^{q}:=-\int_{\Gamma_i} \frac{\eta}{z-\eta} \phi_q(z)e^{-xz}dz,
\end{align*}
with $\Gamma_1=(0,\eta)$, $\Gamma_2=(\eta,\infty)$ and $\Gamma_3=(-\infty,0)$. Hence, using the strong Markov property, one has
\begin{align*}
&B^{(q)}(x;u,v)  =\mathbb{E}_{x}\left[  e^{-q\tau_{v}^{+}}1_{\left\{\tau_{v}^{+}<\tau_{u}^{-}\right\}  }\right]\\
&= \mathbb{E}_{x} \left[ e^{-q \tau_v^+} \right] -
\mathbb{E}_{x}\left[e^ {-q\tau_u^- }1_{\{X_{\tau_u^-}=u, \tau_u^-<\tau_v^+\} }\right]
\mathbb{E}_{u}\left[ e^{-q \tau_v^+}\right]
-\int_0^{\infty}\mathbb{E}_{x}\left[e^ {-q\tau_u^- }1_{\{u-X_{\tau_u^-}\in dy, \tau_u^-<\tau_v^+\} }\right] \mathbb{E}_{u-y}\left[ e^{-q \tau_v^+}\right],
 \end{align*}
 and
\begin{align*}
C^{(q,s)}(x;u,v) &=\mathbb{E}_{x}\left[  e^{-q\tau_{u}^{-}-s(u-X_{\tau_{u}^{-}})}1_{\left\{  \tau_{u}^{-}<\tau_{v}^{+}\right\}  }\right] \\
&=\mathbb{E}_{x}\left[  e^{-q\tau_{u}^{-}-s(u-X_{\tau_{u}^{-}})}\right] -\mathbb{E}_{x}\left[  e^{-q\tau_{v}^{+}}1_{\left\{\tau_{v}^{+}<\tau_{u}^{-}\right\}  }\right] \mathbb{E}_{v}\left[  e^{-q\tau_{u}^{-}-s(u-X_{\tau_{u}^{-}})}\right]  .
\end{align*}
It is easy to solve $B^{(q)}(x;u,v)$ and $C^{(q,s)}(x;u,v)$ using Equations (\ref{F1})-(\ref{F3}) (noticing that the `deficit' in (\ref{F2}) has an exponential density)
\begin{align*}
&B^{(q)}(x;u,v)=\frac{F_3^q(x)-I_1(x,u)F_3^q(u)-I_2(x,u)\int_0^{\infty} \eta e^{-\eta y}F_3^q(u-y)dy}{F_3^q(v)-I_1(v,u)F_3^q(u)-I_2(v,u)\int_0^{\infty} \eta e^{-\eta y}F_3^q(u-y)dy},\\
&C^{(q,s)}(x;u,v)=\left[I_1(x,u)+I_2(x,u)\frac{\eta}{\eta+s}\right]-B^{(q)}(x;u,v)\left[I_1(v,u)+I_2(v,u)\frac{\eta}{\eta+s}\right].
\end{align*}
Then Assumption \ref{asp} is satisfied, and we could obtain the differential exit parameters $b_{f}^{(q)}(x)= -\left.  \frac{\partial B^{(q)}(x;f(x),v)}{\partial v}\right\vert _{v=x}$ and $c_{f}^{(q,s)}(x)=\left.  \frac{\partial C^{(q,s)}(x;f(x),v)}{\partial v}\right\vert _{v=x}$. The differential calculations are omitted for conciseness and left for interested readers.


\small

\baselineskip13pt

\end{document}